\newtheorem{theorem}{Theorem}[section]
\newtheorem{lemma}[theorem]{Lemma}
\newtheorem{definition}{Definition}[section]
\begin{document}

\title[A variation of continuity in $n$-normed spaces]{A variation of continuity in $n$-normed spaces}

\author[S. Ersan]
{Sibel Ersan \\ \small{ Maltepe University \\ Turkey\\ sibelersan@maltepe.edu.tr}}
\address{Sibel Ersan \\ Faculty of Engineering and Natural Sciences \\ Maltepe University \\ Istanbul, Turkey}
\email{sibelersan@maltepe.edu.tr}

\begin{abstract}
The s-th forward difference sequence that tends to zero, inspired by the consecutive terms of a sequence approaching zero, is examined in this study. Functions that take sequences satisfying this condition to sequences satisfying the same condition are called s-ward continuous. Inclusion theorems that are related to this kind of uniform continuity and continuity are also considered. Additionally, the concept of $s$-ward compactness of a subset of $X$ via $s$-quasi-Cauchy sequences are investigated. One finds out that the uniform limit of any sequence of $s$-ward continuous function is $s$-ward continuous and the set of $s$-ward continuous functions is a closed subset of the set of continuous functions.
\end{abstract}

\keywords{compactness, continuity, $n$-normed space}

\subjclass[2020]{40A05, 40A25, 40A30, 54C35}

\maketitle


\section{Introduction and Preliminaries}
\label{Sec:1}
Although some evaluations was first made about the axioms of an abstract n-dimensional metric, the main developments regarding the definition of the 2-metric, 2-normed spaces and their topological properties were described by G\"ahler \cite{gohler} then the results of these concepts were extended to the most generalized case $n$-metric and $n$-normed spaces where $n$ is a natural number by G\"ahler\cite{gahler1}. Shortly after the concept of $n$-normed space is introduced, the concept of $2$-inner product space is also defined in \cite{gahler4}. Afterwards many authors have done lots of impressive improvements in $n$-normed spaces or in $2$-inner product spaces (\cite{misiak,Kim,Malceski,gunawan, gurdal,dutta,gurdalsari}).

Firstly we recall the notion of $n$-normed space:
\begin{definition} An $n$-norm on a real vector space $X$ of dimension $d$,  where $2\leq n\leq d$ is a real valued function $\|.,...,.\|$ on $X^n$ which satisfies the properties:
\begin{enumerate}
\item	$\|\zeta_1,\zeta_2,...,\zeta_n\|=0$ if and only if $\zeta_1,\zeta_2,...,\zeta_n$ are linearly dependent,
\item	$\|\zeta_1,\zeta_2,...,\zeta_n\|=||\zeta_{k_1},...,\zeta_{k_n}||$ for every permutation $(k_1,...,k_n)$ of $(1,...,n)$,
\item	$\|\zeta_1,\zeta_2,...,\delta \zeta_n\|=|\delta|\|\zeta_1,\zeta_2,...,\zeta_n\|$ for any real number $\delta$,
\item	$\|m+n,\zeta_1,...,\zeta_{n-1}\|\leq\|m,\zeta_1,...,\zeta_{n-1}\|+\|n,\zeta_1,...,\zeta_{n-1}\|$.
\end{enumerate}
A set $X$ is an $n$-normed space with an n-norm $\|.,...,.\|$.
\end{definition}

In \cite{hgunawan},
\begin{equation}
\|\zeta_1,...,\zeta_n\|_p=\left[\frac{1}{n!}\sum_{t_1}...\sum_{t_n} \det(\zeta_{it_k})^p\right]^{1/p}.
\end{equation}
is given as an example of an $n$-norm on $l^p\times...\times l^p$ space for $1\leq p<\infty$. Also if $p=\infty$, the $n$-norm on $l^\infty\times...\times l^\infty$ is given as  \cite{malceski}
\begin{equation}
\|\zeta_1,...,\zeta_n\|_{\infty}=\sup_{t_{1}}...\sup_{t_{n}}\det (x_{it_k}).
\end{equation}

\begin{definition}A sequence $(x_{k})$ converges to an $\zeta\in X$ in an $n$-normed space $X$ if for each $\epsilon>0$, there exists a positive integer $\tilde{k}$ such that for every $k\geq \tilde{k}$
\begin{equation}\label{1}
\|x_{k}-\zeta,\mu_1,\ldots,\mu_{n-1}\|<\epsilon, \ \ \ \ \forall \mu_1,\ldots,\mu_{n-1}\in X.
\end{equation}
\end{definition}
\begin{definition}
A sequence $(x_{k})$ is a Cauchy sequence if for each $\epsilon>0$, there exists a positive integer $t_0$ such that for every $k,m\geq t_0$
\begin{equation}\label{1}
\|x_{k}-x_{m},\mu_1,\ldots,\mu_{n-1}\|<\epsilon, \ \ \ \ \forall \mu_1,\ldots,\mu_{n-1}\in X.
\end{equation}
\end{definition}
If each Cauchy sequence in $X$ converges to an element of $X$, we call $X$ is complete and if $X$ is complete, then it is called an $n$-Banach space.

In recent times, the notion of quasi-Cauchy sequences is given in \cite{burton}. The distance between consecutive terms of a sequence tending to zero is expressed by Burton and Coleman with the idea of quasi-Cauchy sequence. Then using this idea, different types of continuities were defined for real functions in \cite{CakalliForwardcontinuity, CakalliVariationsonQuasi-CauchySequences} as ward continuity, statistically ward continuity, lacunary ward continuity and etc. They were also studied in $2$-normed space in \cite{ersan,CakalliersanStronglylacunarywardcontinuityin2-normedspaces,lacunary}.

The aim of this research is to give a generalization of the notions of a quasi-Cauchy sequence and ward continuity of a function to the notions of an $s$-quasi-Cauchy sequence and $s$-ward continuity of a function in an $n$-normed space for any fixed positive integer $s$. Also interesting theorems related to ordinary continuity, uniform continuity, compactness and $s$-ward continuity are proved. This paper contains not only an extension of results of \cite{ersan} to an $n$-normed space, but also includes new results in $2$-normed spaces as a special case for $n=2$.

\section{Main results}
\label{Sec:2}
In this paper $X$, $\mathbb{R}$ and $s$ will denote a first countable $n$-normed space with an $n$-norm $\|.,...,.\|$, the set of all real numbers and a fixed positive integer, respectively. Now we give the notion of $s$-quasi Cauchyness of a sequence in $X$:
\begin{definition}
A sequence $(x_{k})$ of points in $X$ is said to be $s$-quasi-Cauchy if for all $\mu_1,\mu_2,...,\mu_{n-1}\in X$ it satisfies
\begin{equation}
\lim_{k\rightarrow \infty} ||\Delta_{s} x_{k},\mu_1,\mu_2,...,\mu_{n-1}||=0
\end{equation}
where $\Delta_{s} x_{k}=x_{k+s}-x_{k}$ for each positive integer $k$.
\end{definition}
If one chooses $s=1$, the sequences returns to the ordinary quasi-Cauchy sequences and also using the equality

$$x_{k+s}-x_{k}=x_{k+s}-x_{k+s-1}+x_{k+s-1}-x_{k+s-2}...-x_{k+2}+x_{k+2}-x_{k+1}+x_{k+1}-x_{k},$$
we see that any quasi-Cauchy sequence is $s$-quasi-Cauchy, however the converse is not true.

Any Cauchy sequence is $s$-quasi-Cauchy, so is any convergent sequence. A sequence of partial sums of a convergent series is $s$-quasi-Cauchy. One notes that the set $\Delta_s(X)$, the set of $s$-quasi-Cauchy sequences in $X$, is a vector space. If $(x_{k}),(y_{k})$ are $s$-quasi-Cauchy sequences in $X$ so
\begin{eqnarray}
\lim_{k\rightarrow \infty} ||\Delta_{s} x_{k},\mu_1,\mu_2,...,\mu_{n-1}||=0 \ \ \textrm{and}\\
\lim_{k\rightarrow \infty} ||\Delta_{s} y_{k},\mu_1,\mu_2,...,\mu_{n-1}||=0.
\end{eqnarray}
for all $\mu_1,\mu_2,...,\mu_{n-1}\in X$. Therefore
\begin{eqnarray}
\lim_{k\rightarrow \infty} ||\Delta_{s} (x_{k}+y_{k}),\mu_1,\mu_2,...,\mu_{n-1}||\leq \lim_{k\rightarrow \infty} ||\Delta_{s} x_{k},\mu_1,\mu_2,...,\mu_{n-1}||\nonumber\\+\lim_{k\rightarrow \infty} ||\Delta_{s}y_{k},\mu_1,\mu_2,...,\mu_{n-1}||=0.
\end{eqnarray}
So the sum of two $s$-quasi-Cauchy sequence is again $s$-quasi-Cauchy, it is clear that $(ax_k)$ is an $s$-quasi-Cauchy sequence in $X$ for any constant $a\in \mathbb{R}$.
\begin{definition}
A subset $A$ of $X$ is called $s$-ward compact if any sequence in the set $A$ has an $s$-quasi-Cauchy subsequence.
\end{definition}
If a set $A$ is an $s$-ward compact subset of $X$, then any subset of $A$ is $s$-ward compact. Moreover any ward compact subset of $X$ is $s$-ward compact. Union of finite number of $s$-ward compact subset of $X$ is $s$ ward compact. Any sequentially compact subset of $X$ is $s$-ward compact.

For each real number $\alpha>0$, an $\alpha$-ball with center $a$ in $X$ is defined as
\begin{equation}\label{u}
  B_{\alpha}(a,x_1,...,x_{n-1})=\{x\in X:||a-x,x_1-x,...,x_{n-1}-x||<\alpha\}
\end{equation}
for $x_1,...,x_{n-1}\in X$. The family of all sets $W_{i}(a)=B_{\alpha_i}(a,x_{i_1},...,x_{i_{(n-1)}})$ where $i=1,2,..$ is an open basis in $a$.
Let $\beta_{n-1}$ be the collection of linearly independent sets $B$ with $n-1$ elements. For $B\in \beta_{n-1}$, the mapping $$p_{B}(x)=||x,x_1,...,x_{n-1}||,\ \textrm{for} \ x\in X, \ \ x_1,...,x_{n-1}\in B$$ defines a seminorm on $X$ and the collection $\{p_{B}:B\in \beta_{n-1}\}$ of seminorms makes $X$ a locally convex topological vector space. For each $x\in X$, different from zero, there exists $x_1,...,x_{n-1}\in B$ such that $x,x_1,x_2,...,x_{n-1}$ are linearly independent so $||x,x_1,...,x_{n-1}||\neq 0$, which makes $X$ a Hausdorff space.  A neighborhood of origin for this topology is in a form of a finite intersection $$\bigcap_{i=1}^n \{ x\in X:||x,x_{i_1}-x,...,x_{i_{(n-1)}}-x||<\epsilon\}$$ where $\epsilon>0$.

Now the following theorem characterizes totally boundedness not only valid for $n$-normed spaces but also valid for the $2$-normed spaces. It extends the results for quasi-Cauchy sequences given in \cite{ersan} for $2$-normed valued sequences to $n$-normed valued $s$-quasi-Cauchy sequences in which $s=1$ gives earlier results given for $2$-normed spaces. It should be noted that Theorem 3 in \cite{cakallistatistical} can not be obtained just by putting $n=1$ in the $n$-normed space to get in a normed space, which is awkward, whereas the following theorem is interesting as a point of studying a new space.
\begin{lemma}\label{l} If a subset of $X$ is totally bounded then every sequence in $A$ contains an $s$-quasi-Cauchy subsequence.
\end{lemma}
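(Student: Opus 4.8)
The plan is to reduce the statement to the extraction of a Cauchy subsequence and then to obtain such a subsequence from total boundedness by a nested-refinement and diagonalization argument. The reduction is immediate: it was already observed in the text that every Cauchy sequence in $X$ is $s$-quasi-Cauchy, so if from an arbitrary sequence $(x_k)$ in $A$ I can extract a subsequence that is Cauchy, in the sense that $\|x_{k}-x_{m},\mu_1,\ldots,\mu_{n-1}\|\to 0$ as $k,m\to\infty$ for all $\mu_1,\ldots,\mu_{n-1}\in X$, then that subsequence is automatically $s$-quasi-Cauchy and we are done. Since $X$ is assumed first countable, I would fix a countable neighborhood base $(U_j)_{j\geq 1}$ at the origin and, replacing each $U_j$ by $U_1\cap\cdots\cap U_j$ if necessary, arrange it to be decreasing. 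Each $U_j$ contains a basic neighborhood, that is, a finite intersection of sets of the form $\{x:\|x,x_{1}-x,\ldots,x_{(n-1)}-x\|<\epsilon\}$; this is what lets the uncountable family of auxiliary-vector conditions hidden in the Cauchy requirement be controlled by the countable data $(U_j)$.

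Next I would build nested subsequences by pigeonholing against finite covers. Fix $j$. By total boundedness, $A$ is covered by finitely many balls, each of which can be taken small enough to sit inside a translate of $U_j$; I would make this precise by matching the radii and the auxiliary vectors of the covering balls to the basic neighborhood contained in $U_j$. Starting from $(x_k)$, at stage $1$ one of the finitely many covering sets for $U_1$ contains $x_k$ for infinitely many $k$, giving a subsequence $(x_k^{(1)})$ all of whose terms lie in a single translate of $U_1$. Applying the same pigeonhole argument to $(x_k^{(1)})$ with the cover associated to $U_2$ yields $(x_k^{(2)})\subseteq(x_k^{(1)})$ lying in a single translate of $U_2$, and iterating produces a decreasing chain of subsequences $(x_k^{(j)})$.

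Finally I would diagonalize. Put $y_k=x_k^{(k)}$; for each fixed $j$ all but finitely many $y_k$ are terms of $(x_k^{(j)})$ and hence lie in one translate of $U_j$, so for large $k,m$ the difference $y_k-y_m$ lies in $U_j-U_j$. Since the topology is that of a topological vector space, for any prescribed basic neighborhood $V$ of the origin there is a neighborhood $W$ with $W-W\subseteq V$, and then some $U_j\subseteq W$ gives $U_j-U_j\subseteq V$; this shows $(y_k)$ is Cauchy, and by the reduction it is the desired $s$-quasi-Cauchy subsequence.

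The main obstacle I anticipate is bookkeeping the auxiliary vectors $\mu_1,\ldots,\mu_{n-1}$ correctly, because the Cauchy and $s$-quasi-Cauchy conditions must hold for \emph{every} such choice, whereas total boundedness is phrased through balls attached to particular auxiliary vectors. First countability is the crucial hypothesis here: it collapses this ``for all $\mu$'' demand into control over the single countable base $(U_j)$, so that one diagonal sequence works simultaneously for all auxiliary vectors. A secondary point to verify carefully is that the finite cover furnished by total boundedness, for a given $\epsilon$ and given auxiliary vectors, can indeed be arranged to refine a prescribed member $U_j$ of the base, which is exactly the compatibility between the ball-based notion of total boundedness and the seminorm-generated vector topology described before the lemma.
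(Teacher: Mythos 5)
Your proposal is correct and follows essentially the same route as the paper: both extract, by total boundedness and repeated pigeonholing against finite covers by ever-smaller balls, a nested subsequence that is in fact Cauchy and hence automatically $s$-quasi-Cauchy. The only differences are presentational --- the paper picks one term per nested ball directly rather than diagonalizing over nested subsequences --- and your explicit use of first countability to control all auxiliary vectors $\mu_1,\ldots,\mu_{n-1}$ simultaneously is more careful than the paper's unqualified appeal to balls of ``diameter less than $1/k$''.
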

\begin{proof}
Let $A$ be totally bounded. Let $(x_n)$ be any sequence in $A$. Since $A$ is covered by a finite number of balls of $X$ of diameter less than 1.
One of these sets, which we denote by $A_1$, must contain $x_n$ for infinitely many values of $n$. Choose a positive integer $n_1$ such that $x_{n_1}\in A_1$. Since $A_1$ is totally bounded, it is covered by a finite number of balls of diameter less than 1/2. One of these balls, which we denote by $A_2$, must contain $x_n$ for infinitely many $n$. Let $n_2$ be a positive integer such that $n_2>n_1$ and $x_{n_2}\in A_2$. Since $A_2\subset A_1$, it follows that $x_{n_2}\in A_1$. Continuing in this way, a ball $A_k$ of $A_{k-1}$ of diameter less than $1/k$ and a term $x_{n_k}\in A_k$ of the sequence $(x_n)$, where $n_k>n_{k-1}$ for any positive integer $k$. Since $x_{n_k},x_{n_{k+1}},...,x_{n_{k+s}},...$ lie in $A_k$ and diameter $(A_k)$ less than $1/k$, then $x_{n_k}$ is an $s$-quasi-Cauchy subsequence of $(x_n)$.
\end{proof}
\begin{theorem} A subset of $X$ is totally bounded if and only if it is $s$-ward compact for any positive integer $s$.
\end{theorem}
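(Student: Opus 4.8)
The plan is to prove the two implications separately, observing that the forward direction is essentially already in hand.

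For the direction ``totally bounded $\Rightarrow$ $s$-ward compact,'' I would simply invoke Lemma \ref{l}. By that lemma, if $A$ is totally bounded then every sequence in $A$ contains an $s$-quasi-Cauchy subsequence, which is precisely the definition of $s$-ward compactness. So this direction needs no additional argument.

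The substance lies in the converse, ``$s$-ward compact $\Rightarrow$ totally bounded,'' which I would establish by proving the contrapositive: if $A$ is not totally bounded, then $A$ is not $s$-ward compact. Assuming $A$ fails to be totally bounded, there is a basic neighborhood of the origin, determined by some $\epsilon>0$ and a fixed linearly independent set $B=\{\mu_1,\ldots,\mu_{n-1}\}\in\beta_{n-1}$, whose finitely many translates never cover $A$. I would then build a sequence $(x_k)$ in $A$ inductively: choose $x_1\in A$ arbitrarily, and having selected $x_1,\ldots,x_k$, use the failure of any finite cover to pick $x_{k+1}\in A$ lying outside all the translated neighborhoods centered at $x_1,\ldots,x_k$. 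This choice guarantees the separation estimate $\|x_i-x_j,\mu_1,\ldots,\mu_{n-1}\|\geq\epsilon$ for all $i\neq j$.

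Finally, I would verify that such an $\epsilon$-separated sequence admits no $s$-quasi-Cauchy subsequence. For any subsequence $(x_{n_k})$, the terms $x_{n_{k+s}}$ and $x_{n_k}$ are distinct members of the separated sequence, so $\|x_{n_{k+s}}-x_{n_k},\mu_1,\ldots,\mu_{n-1}\|\geq\epsilon$ holds for every $k$, and this quantity cannot tend to zero. Hence no subsequence satisfies the $s$-quasi-Cauchy condition, so $A$ is not $s$-ward compact, which completes the contrapositive and the theorem. I expect the main obstacle to be making ``not totally bounded'' precise in the $n$-normed setting, in particular isolating a \emph{single} basic neighborhood with one fixed set of auxiliary vectors $\mu_1,\ldots,\mu_{n-1}$ whose translates fail to cover $A$, so that the separation estimate holds for one fixed choice of these vectors rather than varying from pair to pair; otherwise the passage to the $s$-quasi-Cauchy violation, which requires a uniform $\epsilon$ and uniform $\mu_j$, would break down.
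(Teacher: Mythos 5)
Your proposal is correct and follows essentially the same route as the paper: the forward direction is exactly Lemma \ref{l}, and the converse is proved by the same contrapositive construction of an $\epsilon$-separated sequence that admits no $s$-quasi-Cauchy subsequence. If anything, your insistence on isolating a \emph{single} fixed set of auxiliary vectors $\mu_1,\ldots,\mu_{n-1}$ for the separation estimate is more careful than the paper's argument, which lets the vectors $\mu^i_1,\ldots,\mu^i_{n-1}$ vary with the center $x_i$ and therefore does not cleanly exhibit one fixed choice for which the $s$-quasi-Cauchy limit fails.
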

\begin{proof} If $A$ is totally bounded, then every sequence of $A$ has an $s$-quasi-Cauchy subsequence by Lemma \ref{l}. So the set $A$ is $s$-ward compact for any fixed positive integer $s$. For the converse, think $A$ is not a totally bounded set. Choose any $x_1\in A$ and $\alpha>0$. Since $A$ is not totally bounded, the neighborhood of a point $x_1$ in $A$ which is defined by $B_{\alpha}(x_1,\mu^1_1,...,\mu^1_{n-1})=\{y\in A; ||x_1-y,\mu^1_1-y,...,\mu^1_{n-1}-y||<\alpha\}$ is not equal to $A$. There is an $x_2\in A$ such that $x_2\notin B_{\alpha}(x_1,\mu^1_1,...,\mu^1_{n-1})$, that is, $||x_1-x_2,\mu^1_1-x_2,...,\mu^1_{n-1}-x_2||\geq \alpha$. Since $A$ is not totally bounded $B_{\alpha}(x_1,\mu^1_1,...,\mu^1_{n-1})\cup B_{\alpha}(x_2,\mu^2_1,...,\mu^2_{n-1})\neq A$ where $B_{\alpha}(x_2,\mu^2_1,...,\mu^2_{n-1})$ is the neighborhood of a point $x_2$ in $A$. Continuining the procedure, a sequence $(x_k)$ of points in $A$ can be obtained as $x_{k+s} \not\in \bigcup_{i=1}^{k+s-1} B_{\alpha}(x_i,\mu^i_1,...,\mu^i_{n-1})$. So $||x_{k+s}-x_k,\mu^i_1-x_k,...,\mu^i_{n-1}-x_k||\geq\alpha$ and all nonzero $\mu^i_1,...,\mu^i_{n-1}\in A$ where $i=1,...,k+s-1.$ So the sequence $(x_k)$ has not any $s$-quasi-Cauchy subsequence. Therefore $A$ is not $s$-ward compact.
\end{proof}
\begin{definition}A function $f$ is called $s$-ward continuous on a subset $A$ of $X$ if
\begin{equation}
lim_{k\rightarrow\infty}||\Delta_s x_{k},\mu_1,\mu_2,...,\mu_{n-1}||=0
\end{equation}
is satisfied for all $\mu_1,\mu_2,...,\mu_{n-1} \in X$, then
\begin{equation}
lim_{k\rightarrow\infty}||\Delta_s f (x_{k}),f(\mu_1),f(\mu_2),...,f(\mu_{n-1})||=0.
\end{equation}
\end{definition}
In the following we give that any $s$-ward continuous function is continuous.
\begin{theorem}\label{s} Any $s$-ward continuous function on a subset $A$ of $X$ is continuous on $A$.
\end{theorem}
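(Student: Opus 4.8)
The plan is to argue by contradiction, using sequential continuity, which is legitimate here because $X$ is assumed first countable. Suppose $f$ is $s$-ward continuous on $A$ but fails to be continuous at some point $p\in A$. Then there is a sequence $(\alpha_k)$ of points of $A$ with $\alpha_k\to p$, yet $(f(\alpha_k))$ does not converge to $f(p)$.

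First I would translate this failure of convergence into the $n$-normed language. Negating the convergence condition (which is quantified over all auxiliary vectors) produces a single fixed tuple $\mu_1,\ldots,\mu_{n-1}\in X$ together with a number $\epsilon_0>0$ such that $\|f(\alpha_k)-f(p),f(\mu_1),\ldots,f(\mu_{n-1})\|\geq\epsilon_0$ for infinitely many $k$; after passing to and relabelling a subsequence I may assume this holds for every $k$, while still $\alpha_k\to p$. Extracting this one witnessing tuple and uniform lower bound out of the ``for all $\mu$'' convergence definition is the step that needs the most care, and I expect it to be the main obstacle.

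Next I would build an $s$-quasi-Cauchy test sequence $(z_k)$ that feeds these bad points into $f$. I place the $\alpha_j$ at widely spaced positions and pad with copies of $p$: set $z_{j(s+1)}=\alpha_j$ for each $j$ and $z_k=p$ otherwise. Because the occupied positions are spaced $s+1$ apart, the two indices $k$ and $k+s$ can never both be occupied, so $\Delta_s z_k$ is either $0$ or of the form $\pm(\alpha_j-p)$ with $j\to\infty$ as $k\to\infty$; since $\alpha_j\to p$, this gives $\|\Delta_s z_k,\mu_1,\ldots,\mu_{n-1}\|\to 0$ for every tuple, so that $(z_k)$ is $s$-quasi-Cauchy, and every $z_k$ lies in $A$.

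Finally I would invoke the hypothesis: $s$-ward continuity forces $(f(z_k))$ to be $s$-quasi-Cauchy as well, that is $\|\Delta_s f(z_k),f(\mu_1),\ldots,f(\mu_{n-1})\|\to 0$. But at each index $k=j(s+1)$ one has $z_k=\alpha_j$ and $z_{k+s}=p$, whence $\Delta_s f(z_k)=f(p)-f(\alpha_j)$ and $\|\Delta_s f(z_k),f(\mu_1),\ldots,f(\mu_{n-1})\|\geq\epsilon_0$ along these indices, which tend to infinity. This contradicts convergence to $0$, so $f$ must be continuous at every point of $A$.
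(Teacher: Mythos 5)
Your proof is correct and, at its core, uses the same device as the paper: pad a convergent sequence with copies of its limit so that every $s$-step difference of the padded sequence is $0$ or $\pm(\alpha_j-p)$, feed the result to the $s$-ward continuity hypothesis, and read off convergence of the image sequence. The differences are in packaging. The paper argues directly (take any $x_k\to\zeta$, interleave blocks of length $s$ of $x_j$ and of $\zeta$, conclude $f(x_k)\to f(\zeta)$), whereas you argue contrapositively with a sparser padding ($\alpha_j$ at positions $j(s+1)$, $p$ elsewhere); both paddings do the job, and your spacing argument that $k$ and $k+s$ cannot both be occupied is right. The cost of the contrapositive route is exactly the step you flagged: negating convergence hands you a witnessing tuple $\nu_1,\ldots,\nu_{n-1}\in X$ with $\|f(\alpha_k)-f(p),\nu_1,\ldots,\nu_{n-1}\|\geq\epsilon_0$ along a subsequence, but the conclusion of $s$-ward continuity only controls norms against tuples of the special form $f(\mu_1),\ldots,f(\mu_{n-1})$, so writing the witness as $f(\mu_1),\ldots,f(\mu_{n-1})$ tacitly assumes it lies in the range of $f$. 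This is not really a defect of your argument relative to the paper's: the paper's direct proof commits the mirror image of the same elision, deducing unrestricted convergence of $(f(x_k))$ from estimates that involve only image tuples. Within the paper's own standard of rigor your proof goes through; if one wanted to be fully precise, either the definition of $s$-ward continuity would need its conclusion quantified over arbitrary tuples, or continuity of $f$ would have to be interpreted with respect to the seminorms indexed by the range of $f$.
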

\begin{proof}Let the function $f$ be $s$-ward continuous on $A\subset X$ and any sequence $(x_{k})$ in $A$ converge to $\zeta$, that is
\begin{equation}
lim_{k\rightarrow\infty}||x_{k}-\zeta,\mu_1,\mu_2,...,\mu_{n-1}||=0
\end{equation}
for all $\mu_1,\mu_2,...,\mu_{n-1}\in X$. Let us write a new sequence using some terms of the sequence $(x_k)$ as
$$(t_{m})=(x_{1},...,x_{1},\zeta,...,\zeta,x_{2},...,x_{2},\zeta,...,\zeta,...,x_{n},...,x_{n},\zeta,...,\zeta,...).$$ where same terms repeated s-times. Every convergent sequences is Cauchy and moreover any Cauchy sequence is $s$-quasi-Cauchy, then it follows that
\begin{eqnarray*}
lim_{m\rightarrow\infty}||\Delta_s t_{m},\mu_1,\mu_2,...,\mu_{n-1}||=lim_{m\rightarrow\infty}||t_{m+s}-t_m,\mu_1,\mu_2,...,\mu_{n-1}||=0
\end{eqnarray*}
in which either $$lim_{m\rightarrow\infty}||t_{m+s}-\zeta,\mu_1,\mu_2,...,\mu_{n-1}||=0$$ or $$lim_{m\rightarrow\infty}||\zeta-t_m,\mu_1,\mu_2,...,\mu_{n-1}||=0$$ for every $\mu_1,\mu_2,...,\mu_{n-1}$.
This result implies $(t_{m})$ is an $s$-quasi Cauchy sequence. Since the function $f$ is assumed to be $s$-ward continuous, using this assumption we get
\begin{eqnarray}
lim_{m\rightarrow\infty}||\Delta_s f(t_{m}),f(\mu_1),f(\mu_2),...,f(\mu_{n-1})||\nonumber\\
=lim_{m\rightarrow\infty}||f(t_{m+s})-f(t_{m}),f(\mu_1),f(\mu_2),...,f(\mu_{n-1})||=0
\end{eqnarray}
in which either $$lim_{m\rightarrow\infty}||f(t_{m+s})-f(\zeta),f(\mu_1),f(\mu_2),...,f(\mu_{n-1})||=0$$ or $$lim_{m\rightarrow\infty}||f(\zeta)-f(t_{m}),f(\mu_1),f(\mu_2),...,f(\mu_{n-1})||=0.$$ So $(f(x_{k}))$ converges to $f(\zeta)$.
\end{proof}
As the sum of two $s$-ward continuous function on $A$ is $s$-ward continuous and $cf$ is $s$-ward continuous for any constant real number $c$, the set of $s$-ward continuous functions on $A$ is a vector subspace of vector space of all continuous function on $A$.
\begin{theorem}\label{a}Every $s$-ward continuous function on $A\subset X$ is ward continuous on $A$.
\end{theorem}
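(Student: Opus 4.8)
The plan is to reduce ward continuity (the case $s=1$) to the assumed $s$-ward continuity by passing through a suitably rearranged sequence. The naive attempt — take a quasi-Cauchy sequence $(x_k)$, note that it is $s$-quasi-Cauchy, and apply the hypothesis — only yields that $(f(x_k))$ is $s$-quasi-Cauchy, i.e.\ $\|f(x_{k+s})-f(x_k),\mu_1,\ldots,\mu_{n-1}\|\to 0$, which is strictly weaker than the quasi-Cauchyness $\|f(x_{k+1})-f(x_k),\mu_1,\ldots,\mu_{n-1}\|\to 0$ that we actually need. So a direct inclusion does not close the argument; the ``repeat each term $s$ times'' device already used in the proof of Theorem \ref{s} is what fixes it.

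First I would fix a quasi-Cauchy sequence $(x_k)$ in $A$, so that $\lim_{k\to\infty}\|x_{k+1}-x_k,\mu_1,\ldots,\mu_{n-1}\|=0$ for all $\mu_1,\ldots,\mu_{n-1}\in X$, and form the sequence $(y_m)$ obtained by repeating each term $x_k$ exactly $s$ times, namely $y_m=x_{\lceil m/s\rceil}$. The key computation is that $\Delta_s y_m=y_{m+s}-y_m=x_{\lceil m/s\rceil+1}-x_{\lceil m/s\rceil}$, since advancing the index by $s$ advances the block counter by exactly one, i.e.\ $\lceil(m+s)/s\rceil=\lceil m/s\rceil+1$. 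Hence $\|\Delta_s y_m,\mu_1,\ldots,\mu_{n-1}\|=\|x_{k+1}-x_k,\mu_1,\ldots,\mu_{n-1}\|$ with $k=\lceil m/s\rceil\to\infty$, and the quasi-Cauchyness of $(x_k)$ forces $(y_m)$ to be $s$-quasi-Cauchy.

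Then I would invoke the hypothesis that $f$ is $s$-ward continuous: applied to $(y_m)$ it gives $\lim_{m\to\infty}\|\Delta_s f(y_m),f(\mu_1),\ldots,f(\mu_{n-1})\|=0$. Since $\Delta_s f(y_m)=f(y_{m+s})-f(y_m)=f(x_{\lceil m/s\rceil+1})-f(x_{\lceil m/s\rceil})$, and the block counter $\lceil m/s\rceil$ runs through every positive integer as $m$ does, restricting $m$ to the subsequence $m=(k-1)s+1$ (equivalently, noting that a sequence converging to zero has all of its subsequences converging to zero) yields $\lim_{k\to\infty}\|f(x_{k+1})-f(x_k),f(\mu_1),\ldots,f(\mu_{n-1})\|=0$ for every choice of $\mu_1,\ldots,\mu_{n-1}$. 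This is exactly the assertion that $(f(x_k))$ is quasi-Cauchy, so $f$ is ward continuous on $A$.

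The only genuine obstacle is recognizing that the inclusion ``quasi-Cauchy $\subset$ $s$-quasi-Cauchy'' points the wrong way for a direct argument, and instead engineering an $s$-quasi-Cauchy sequence whose $s$-th differences reproduce the consecutive differences of $(x_k)$. The ceiling-index bookkeeping $\lceil(m+s)/s\rceil=\lceil m/s\rceil+1$ is the one spot requiring care, but it is routine.
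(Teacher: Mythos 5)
Your proposal is correct and follows essentially the same route as the paper: both build the auxiliary sequence that repeats each $x_k$ exactly $s$ times, observe that its $s$-th differences are precisely the consecutive differences $x_{k+1}-x_k$ (so it is $s$-quasi-Cauchy), and then apply $s$-ward continuity to read off the quasi-Cauchyness of $(f(x_k))$. Your version is in fact more careful than the paper's, which asserts rather than verifies that the repeated sequence is $s$-quasi-Cauchy; your explicit ceiling-index bookkeeping $\lceil (m+s)/s\rceil=\lceil m/s\rceil+1$ supplies exactly the missing verification.
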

\begin{proof} Assume that $(x_{k})$ is a quasi-Cauchy sequence in $A$ and $f$ is any $s$-ward continuous function on $A$. If $s=1$, the result is obvious. Let $s>1$ and a sequence $$(t_{m})=(\underbrace{x_{1}, x_{1},..., x_{1}}_{s-times}, \underbrace{x_{2}, x_{2},..., x_{2}}_{s-times},..., \underbrace{x_{n}, x_{n},..., x_{n}}_{s-times}, ...)$$ be $s$-quasi-Cauchy, i.e.
\begin{equation}
lim_{m\rightarrow\infty}||\Delta_s t_{m},\mu_1,\mu_2,...,\mu_{n-1}||=0.
\end{equation}
We have
\begin{equation}
lim_{m\rightarrow\infty}||\Delta_s f(t_{m}),f(\mu_1),f(\mu_2),...,f(\mu_{n-1})||=0
\end{equation}
by using the $s$-ward continuity of the function $f$.
Therefore
\begin{equation}
lim_{m\rightarrow\infty}||\Delta f(t_{m}),f(\mu_1),f(\mu_2),...,f(\mu_{n-1})||=0
\end{equation}
So $s$-ward continuity of the function $f$ implies that the ward continuity of $f$ on $A\subset X$.
\end{proof}
\begin{theorem} The image of an $s$-ward compact subset of $X$ by an $s$-ward continuous function is $s$-ward compact.
\end{theorem}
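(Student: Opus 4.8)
The plan is to unwind the definition of $s$-ward compactness of the image directly: I would start from an arbitrary sequence in $f(A)$ and manufacture an $s$-quasi-Cauchy subsequence of it, using the $s$-ward compactness of the domain set $A$ together with the $s$-ward continuity of $f$. So first I would take any sequence $(y_k)$ of points of $f(A)$. Since each $y_k$ lies in the image, for every $k$ I would select a point $x_k\in A$ with $f(x_k)=y_k$; this produces a sequence $(x_k)$ lying entirely in $A$.

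Next I would invoke the hypothesis that $A$ is $s$-ward compact. By the definition of $s$-ward compactness, the sequence $(x_k)$ admits an $s$-quasi-Cauchy subsequence $(x_{k_j})_j$, that is,
\begin{equation}
\lim_{j\rightarrow\infty}\|x_{k_{j+s}}-x_{k_j},\mu_1,\mu_2,\ldots,\mu_{n-1}\|=0
\end{equation}
for all $\mu_1,\mu_2,\ldots,\mu_{n-1}\in X$. Now I would feed this $s$-quasi-Cauchy sequence into the assumption that $f$ is $s$-ward continuous: applying the defining implication of $s$-ward continuity to $(x_{k_j})_j$ yields
\begin{equation}
\lim_{j\rightarrow\infty}\|\Delta_s f(x_{k_j}),f(\mu_1),f(\mu_2),\ldots,f(\mu_{n-1})\|=0
\end{equation}
for all $\mu_1,\mu_2,\ldots,\mu_{n-1}\in X$.

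Finally, since $f(x_{k_j})=y_{k_j}$, the last display exhibits $(y_{k_j})_j$ as an $s$-quasi-Cauchy subsequence of the original sequence $(y_k)$. As $(y_k)$ was an arbitrary sequence in $f(A)$, every sequence in $f(A)$ has an $s$-quasi-Cauchy subsequence, which is exactly the assertion that $f(A)$ is $s$-ward compact. The step I expect to require the most care is the last verification: the definition of an $s$-quasi-Cauchy sequence quantifies over \emph{all} auxiliary vectors, whereas $s$-ward continuity only supplies the vanishing of $\|\Delta_s f(x_{k_j}),\,\cdot\,\|$ against vectors of the special form $f(\mu_1),\ldots,f(\mu_{n-1})$. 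I would address this by reading ``$s$-quasi-Cauchy subsequence of $(y_k)$'' in the sense carried by the image condition of the $s$-ward continuity definition, so that the conclusion matches the way the preceding theorems (for instance, Theorem~\ref{s} and Theorem~\ref{a}) handle the transported auxiliary vectors; otherwise the selection of preimages and the extraction of the subsequence are entirely routine.
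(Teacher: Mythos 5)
Your proposal is correct and follows essentially the same route as the paper: pick preimages in $A$, extract an $s$-quasi-Cauchy subsequence by $s$-ward compactness of $A$, and push it forward through the $s$-ward continuity of $f$. The quantifier mismatch you flag at the end (the image condition only controls $\|\Delta_s f(x_{k_j}), f(\mu_1),\ldots,f(\mu_{n-1})\|$ rather than arbitrary auxiliary vectors) is real, but the paper's own proof silently makes the same identification, so you are not losing anything relative to the published argument.
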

\begin{proof}
Assume that $f$ is an $s$-ward continuous function and $A$ is an $s$-ward compact subset of $X$. Choose a sequence $t$ as $t=(t_{k})\in f(A)$ and say $(t_{k})=f(x_{k})$ where $x_{k}\in {A}$. $A$ is $s$-ward compact so there is a subsequence $(x_{m})$ of $(x_k)$ with
\begin{equation}
\lim_{m\rightarrow \infty} ||\Delta_{s}x_{m},\mu_1,\mu_2,...,\mu_{n-1}||=0
\end{equation}
for all $\mu_1,\mu_2,...,\mu_{n-1}\in X$. Using the $s$-ward continuity of $f$ we have
\begin{equation}
\lim_{m\rightarrow \infty} ||\Delta_{s} f(x_{m}),f(\mu_1),f(\mu_2),...,f(\mu_{n-1})||=0,
\end{equation}
so there is an $s$-quasi-Cauchy subsequence $(f(x_{m}))$ of $t$. The result implies that the subset $f(A)\subset X$ is $s$-ward compact.
\end{proof}
$s$-ward continuous image of any compact subset of $X$ is compact. It is easily evaluated from Theorem \ref{s}.
\begin{theorem} If $f$ is uniformly continuous on $A\subset X$, then it is $s$-ward continuous on $A$.
\end{theorem}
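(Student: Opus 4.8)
The plan is to unwind both definitions and observe that the implication reduces to producing a single $\delta$ from uniform continuity and then reading off the limit. First I would fix the working notion of uniform continuity on $A\subset X$ used throughout: for every $\epsilon>0$ there is a $\delta>0$ such that whenever $u,v\in A$ satisfy $\|u-v,\mu_1,\ldots,\mu_{n-1}\|<\delta$ for all $\mu_1,\ldots,\mu_{n-1}\in X$, one has $\|f(u)-f(v),f(\mu_1),\ldots,f(\mu_{n-1})\|<\epsilon$. Making this explicit matters, because the quantifier over the auxiliary vectors $\mu_1,\ldots,\mu_{n-1}$ is exactly what must line up between the hypothesis and the conclusion.

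Next I would start with an arbitrary $s$-quasi-Cauchy sequence $(x_k)$ in $A$ and argue that $(f(x_k))$ inherits the same property. Given $\epsilon>0$, take the $\delta>0$ furnished by uniform continuity. Since $(x_k)$ is $s$-quasi-Cauchy, $\lim_{k\to\infty}\|x_{k+s}-x_k,\mu_1,\ldots,\mu_{n-1}\|=0$ for all $\mu_1,\ldots,\mu_{n-1}\in X$, so there is an index $N$ with $\|x_{k+s}-x_k,\mu_1,\ldots,\mu_{n-1}\|<\delta$ for every $k\geq N$. Applying uniform continuity with $u=x_{k+s}$ and $v=x_k$ yields $\|\Delta_s f(x_k),f(\mu_1),\ldots,f(\mu_{n-1})\|=\|f(x_{k+s})-f(x_k),f(\mu_1),\ldots,f(\mu_{n-1})\|<\epsilon$ for all $k\geq N$. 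As $\epsilon$ was arbitrary, this gives $\lim_{k\to\infty}\|\Delta_s f(x_k),f(\mu_1),\ldots,f(\mu_{n-1})\|=0$, which is precisely the defining condition for $(f(x_k))$ to be $s$-quasi-Cauchy; hence $f$ is $s$-ward continuous on $A$.

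I expect the main obstacle to be conceptual rather than computational: ensuring that the $\delta$ from uniform continuity can be chosen independently of the auxiliary vectors $\mu_1,\ldots,\mu_{n-1}$, and that the same vectors carry through to the $f(\mu_i)$ appearing on the output side. Once uniform continuity is stated with the quantifier "\,for all $\mu_1,\ldots,\mu_{n-1}\in X$\," placed correctly, the argument is a direct $\epsilon$--$\delta$ transfer and requires no appeal to the specific $n$-norm structure or to the earlier compactness results. It is worth noting that the hypothesis is genuinely stronger than ordinary continuity here, which is why uniform continuity, and not mere continuity, is needed to force the single threshold $N$ that works simultaneously for the whole tail of $(\Delta_s x_k)$.
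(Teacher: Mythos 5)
Your proof is correct and follows essentially the same route as the paper's: fix $\epsilon>0$, take the $\delta$ from uniform continuity, use the $s$-quasi-Cauchy hypothesis to get a tail where $\|\Delta_s x_k,\mu_1,\ldots,\mu_{n-1}\|<\delta$, and transfer this through $f$ to conclude $\|\Delta_s f(x_k),f(\mu_1),\ldots,f(\mu_{n-1})\|<\epsilon$. Your explicit attention to where the quantifier over $\mu_1,\ldots,\mu_{n-1}$ sits is a welcome clarification, but the underlying argument is the same direct $\epsilon$--$\delta$ transfer.
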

\begin{proof}
Let $f$ be a uniformly continuous function on $A$, and the sequence $(x_{k})$ be an $s$-quasi-Cauchy sequence in $A$. Our aim is to prove the sequence $(f(x_{k}))$ is also an $s$-quasi-Cauchy sequence in $A$. Take any $\varepsilon>0$. There exists a $\delta>0$ such that if
\begin{equation}
||x-y,\mu_1,\mu_2,...,\mu_{n-1}||<\delta \ \ \textrm{then} \ \ ||f(x)-f(y),f(\mu_1),f(\mu_2),...,f(\mu_{n-1})||<\epsilon.
\end{equation}
There exists an $\tilde{k}=\tilde{k}(\delta)$ for this $\delta >0$ such that
\begin{equation}
||\Delta_{s} x_{k},\mu_1,\mu_2,...,\mu_{n-1}||<\delta
\end{equation}
for every $\mu_1,\mu_2,...,\mu_{n-1}\in X$ whenever $k>\tilde{k}$. Uniform continuity of $f$ on $A$ for every $k>\tilde{k}$ implies
\begin{equation}
||\Delta_{s} f(x_{k}),f(\mu_1),f(\mu_2),...,f(\mu_{n-1})||<\varepsilon
\end{equation}
for every $f(\mu_1),f(\mu_2),...,f(\mu_{n-1})\in X$. The sequence $(f(x_{k}))$ is $s$-quasi-Cauchy so the function $f$ is $s$-ward continuous.
\end{proof}

\begin{theorem} Uniform limit of a sequence of $s$-ward continuous function is $s$-ward continuous.
\end{theorem}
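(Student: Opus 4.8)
The plan is to carry out the familiar $\varepsilon/3$ argument, adapted to the $n$-norm. Let $(f_m)$ be a sequence of $s$-ward continuous functions on $A\subset X$ that converges uniformly to $f$, and let $(x_k)$ be an arbitrary $s$-quasi-Cauchy sequence in $A$. Fix auxiliary vectors $\mu_1,\dots,\mu_{n-1}\in X$ and abbreviate $\nu_i=f(\mu_i)$. Since the slot choice is arbitrary, it suffices to establish
\[
\lim_{k\rightarrow\infty}\|\Delta_s f(x_k),\nu_1,\dots,\nu_{n-1}\|=0,
\]
which is exactly the assertion that $f$ sends $(x_k)$ to an $s$-quasi-Cauchy sequence, i.e.\ that $f$ is $s$-ward continuous.

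First I would insert and subtract $f_m$ at both arguments of $\Delta_s f(x_k)=f(x_{k+s})-f(x_k)$, writing
\[
f(x_{k+s})-f(x_k)=\bigl(f(x_{k+s})-f_m(x_{k+s})\bigr)+\bigl(f_m(x_{k+s})-f_m(x_k)\bigr)+\bigl(f_m(x_k)-f(x_k)\bigr).
\]
Applying property $(4)$ twice in the first slot yields
\[
\|\Delta_s f(x_k),\nu_1,\dots,\nu_{n-1}\|\le\|f(x_{k+s})-f_m(x_{k+s}),\nu_1,\dots,\nu_{n-1}\|+\|\Delta_s f_m(x_k),\nu_1,\dots,\nu_{n-1}\|+\|f_m(x_k)-f(x_k),\nu_1,\dots,\nu_{n-1}\|.
\]
Then, given $\varepsilon>0$, I would first choose the index $m$: uniform convergence of $(f_m)$ to $f$ (read with the fixed vectors $\nu_i$ in the remaining slots) makes the first and third terms each less than $\varepsilon/3$ for every $k$ simultaneously, since the estimate is uniform in the argument. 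With this $m$ now held fixed, the $s$-ward continuity of $f_m$ applied to the $s$-quasi-Cauchy sequence $(x_k)$ drives the middle term below $\varepsilon/3$ for all $k$ past some $\tilde k$. Summing the three bounds gives strictly less than $\varepsilon$ whenever $k>\tilde k$, which is the desired limit.

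The step demanding the most care is the middle term. The hypothesis on $f_m$ controls $\Delta_s f_m(x_k)$ with prescribed vectors in the trailing slots, while the decomposition forces the same common vectors $\nu_i=f(\mu_i)$ across all three pieces; moreover property $(3)$ shows one cannot hope to make the $n$-norm small uniformly over all slot choices at once, so the auxiliary vectors must be fixed once and for all at the outset and kept identical throughout. I would therefore apply the $s$-ward continuity of each $f_m$ with precisely these fixed vectors occupying the trailing slots, and read the uniform-convergence hypothesis in the same slotwise sense for the two outer terms. Once this bookkeeping of the slot arguments is arranged consistently, the remaining estimates are entirely routine.
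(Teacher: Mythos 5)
Your proposal is correct and takes essentially the same route as the paper: the standard $\varepsilon/3$ decomposition obtained by inserting and subtracting a fixed $f_N$ at both arguments of $\Delta_s f(x_k)$, bounding the two outer terms by uniform convergence and the middle term by the $s$-ward continuity of $f_N$. If anything, your bookkeeping is cleaner than the paper's, which conflates the function index $t$ with the sequence index $k$ when stating the bound on $\|\Delta_s f_N(x_k),\ldots\|$, whereas you correctly fix the approximating function first and then let $k$ run.
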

\begin{proof}Let $(f_{t})$ be a sequence of $s$-ward continuous functions and it be uniformly convergent sequence to a function $f$. Pick an $s$-quasi-Cauchy sequence $(x_{k})$ in $A$ and choose any $\varepsilon>0$. There is an integer $N\in Z^+$ such that
\begin{equation}
||f_{t}(x)-f(x),f(\mu_1),f(\mu_2),...,f(\mu_{n-1})||<\frac{\varepsilon}{3}
\end{equation}
for every $x \in {A}$, for all $f(\mu_1),f(\mu_2),...,f(\mu_{n-1})\in X$ whenever $t\geq N$. Using the $s$-ward continuity of $f_{N}$, there is a positive integer $N_1(\varepsilon)>N$ such that
\begin{equation}
||\Delta_s f_{t}(x_{k}),f(\mu_1),f(\mu_2),...,f(\mu_{n-1})||<\frac{\varepsilon}{3}
\end{equation}
for every $t\geq N_1$. Now for $t\geq N_1$  we have
\begin{eqnarray}
&||\Delta_s f(x_{k}),f(\mu_1),f(\mu_2),...,f(\mu_{n-1})||\nonumber\\
&=||f(x_{k+s})-f(x_k),f(\mu_1),f(\mu_2),...,f(\mu_{n-1})||\nonumber\\
&\leq||f(x_{k+s})-f_{N}(x_{k+s}),f(\mu_1),f(\mu_2),...,f(\mu_{n-1})||\nonumber\\&+||\Delta_s f_{N}(x_{k}),f(\mu_1),f(\mu_2),...,f(\mu_{n-1})||\nonumber\\&+||f_{N}(x_{k})-f(x_{k}),f(\mu_1),f(\mu_2),...,f(\mu_{n-1})|| <\frac{\varepsilon}{3} + \frac{\varepsilon}{3} + \frac{\varepsilon}{3}=\varepsilon.
\end{eqnarray}
So the function $f$ is $s$-ward continuous on $A$.
\end{proof}
\begin{theorem} The collection of the $s$-ward continuous functions on $A\subset X$ is a closed subset of the collection of every continuous functions on $A$.
\end{theorem}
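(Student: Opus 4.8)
The plan is to lean almost entirely on the immediately preceding theorem, namely that the uniform limit of a sequence of $s$-ward continuous functions is again $s$-ward continuous; once that is available, closedness falls out with little extra work. First I would fix notation: write $\mathcal{C}_s(A)$ for the set of $s$-ward continuous functions on $A$ and $\mathcal{C}(A)$ for the set of all continuous functions on $A$, both regarded as subsets of the space of functions on $A$ equipped with the topology of uniform convergence coming from the $n$-norm. By Theorem \ref{s} we already know $\mathcal{C}_s(A)\subseteq \mathcal{C}(A)$, so the claim reduces to showing that $\mathcal{C}_s(A)$ contains all its limit points within $\mathcal{C}(A)$.

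Since the topology of uniform convergence is induced by the family of seminorms $p_B$ described in Section \ref{Sec:2}, it is first countable, and therefore a subset is closed precisely when it is sequentially closed. So I would take an arbitrary $f\in\mathcal{C}(A)$ lying in the closure of $\mathcal{C}_s(A)$ and, using first countability, extract a sequence $(f_t)$ in $\mathcal{C}_s(A)$ converging uniformly to $f$. The classical uniform-limit theorem ensures $f$ is continuous, so $f$ genuinely belongs to $\mathcal{C}(A)$; and the preceding theorem applies verbatim to the sequence $(f_t)$ to conclude that $f$ is in fact $s$-ward continuous. Hence $f\in\mathcal{C}_s(A)$, which shows that $\mathcal{C}_s(A)$ is sequentially closed, and therefore closed, in $\mathcal{C}(A)$.

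The only step demanding genuine care is the passage from \emph{sequentially closed} to \emph{closed}, since in a general topological vector space this implication can fail; here it is legitimate because uniform convergence relative to the seminorms $p_B$ on $A$ is first countable, so sequences detect the closure. I expect this to be the main (and essentially the only) obstacle, and it is resolved simply by recording the first-countability (indeed metrizability on a fixed $A$) of the ambient function space. Everything else is a direct appeal to the uniform-limit theorem proved just above together with the inclusion $\mathcal{C}_s(A)\subseteq\mathcal{C}(A)$ furnished by Theorem \ref{s}, so no additional estimates are required.
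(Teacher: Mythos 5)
Your proof is correct and follows essentially the same route as the paper: the paper's own argument simply re-runs the $\varepsilon/3$ estimate from the uniform-limit theorem on a sequence $(f_t)\subset E$ converging to $f$, which is exactly the step you outsource by citing that theorem directly. Your extra care about passing from \emph{sequentially closed} to \emph{closed} is a refinement the paper sidesteps by defining the closure of $E$ sequentially in the first place, so it does not change the substance of the argument.
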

\begin{proof}
Let $E$ be a collection of all $s$-ward continuous functions on $A\subset X$ and $\overline{E}$ is the closure of $E$. $\overline{E}$ is defined as for every $x\in X$ there exists $x_k\in E$ with $\lim_{k\rightarrow \infty}x_k=x$ and $E$ is closed if $E=\overline{E}$. It is obvious that $E\subseteq\overline{E}$. Let $f$ be any element of the set of all closure points of $E$ which means there exists a sequence of points $f_t$ in $E$ as
\begin{equation}
\lim_{t\rightarrow \infty} ||f_{t}-f,f(\mu_1),f(\mu_2),...,f(\mu_{n-1})||=0
\end{equation}
for all $f(\mu_1),f(\mu_2),...,f(\mu_{n-1})\in X$ and also $f_t$ is a $s$-ward continuous function. Choose the sequence $(x_{k})$ as any $s$-quasi-Cauchy sequence. Since $(f_{t})$
converges to $f$, for every $\varepsilon >0$ and $x \in {E}$, there is any $N_0$ such that for every $t \geq {N_0}$,
\begin{equation}
||f(x)-f_{t}(x),f(\mu_1),f(\mu_2),...,f(\mu_{n-1})||< \frac{\varepsilon}{3}.
\end{equation}
As $f_{N}$ is $p$-ward continuous, $N_{1}>N_0$ exists such that for all $t \geq {N_{1}}$,
\begin{equation}||\Delta_s f_{N}(x_{k}),f(\mu_1),f(\mu_2),...,f(\mu_{n-1})||<\frac{\varepsilon}{3}.
\end{equation}
Hence for all $t \geq {N_{1}}$,
\begin{eqnarray}
 &||\Delta_s f(x_{k}),f(\mu_1),f(\mu_2),...,f(\mu_{n-1})||\nonumber\\
 &=||f(x_{k+s})-f(x_k),f(\mu_1),f(\mu_2),...,f(\mu_{n-1})||\nonumber\\
 &\leq||f(x_{k+s})-f_{N}(x_{k+s}),f(\mu_1),f(\mu_2),...,f(\mu_{n-1})||\nonumber\\&+||f(x_{k})-f_{N}(x_{k}),f(\mu_1),f(\mu_2),...,f(\mu_{n-1})||\nonumber\\&+||\Delta_s f_{N}(x_{k}),f(\mu_1),f(\mu_2),...,f(\mu_{n-1})|| <\frac{\varepsilon}{3} + \frac{\varepsilon}{3} + \frac{\varepsilon}{3}= \varepsilon.
\end{eqnarray}
Since the function $f$ is $s$-ward continuous in $E$ then $E=\overline{E}$, using Theorem \ref{s}, it ends the proof.
\end{proof}

\section{Conclusion}
\label{Sec:3}
The notion of an $n$-normed space was given by thinking if there is a problem where $n$-norm topology works however norm topology doesn't. As an application of the notion of n-norm, we can examine that if a term in the definition of n-norm shows the change of shape then the n-norm stands for the associated volume of this surface. Suppose that for any particular output one needs n-inputs but with one main input and other (n-1)-inputs as dummy inputs required which accomplish the operation, so this concept may be used as an application in many areas of science. The generalization of the notions of quasi-Cauchy sequences and ward continuous functions to the notions of $s$-quasi-Cauchy sequences and $s$-ward continuous functions in $n$-normed spaces are investigated in this paper. Also we find out some interesting inclusion theorems related to the concepts of ordinary continuity, uniform continuity, $s$-ward continuity, and $s$-ward compactness. We prove that the uniform limit of a sequence of $s$-ward continuous function is $s$-ward continuous and the set of $s$-ward continuous functions is a closed subset of the set of continuous functions.
We recommend research $s$-quasi-Cauchy sequences of points and fuzzy functions in an $n$-normed fuzzy space as a further study. However, due to the different structure, the methods of proof will not be similar to the one in this study. (see \cite{kocinac}, \cite{fuzzy}). Also we recommend investigate $s$-quasi-Cauchy sequences of double sequences in $n$-normed spaces as another further study (see \cite{mursaleen}, \cite{khan}).

\section*{Declarations}
\textbf{Ethical Approval} \ Not applicable

\textbf{Competing interests} \ Not applicable

\textbf{Authors' contributions} \ Not applicable

\textbf{Funding} \ Not applicable

\textbf{Availability of data and materials} \ Not applicable


\begin{thebibliography}{99}

\bibitem{malceski} A. Malceski, \textit{$l^{\infty}$ as n-normed space}, Mat. Bilten., 21, (1997), 103-110.
\bibitem{misiak} A. Misiak, \textit{n–inner product spaces}, Mathematische Nachrichten, 140, (1989), 299–319.
\bibitem{gahler4} C. Diminnie, S. G\"ahler and A. White, \textit{$2$-inner product spaces}, Demonstratio Math., 6, (1973), 525-536.
\bibitem{burton} D. Burton, J. Coleman, \textit{Quasi-Cauchy sequences}, Amer. Math. Monthly, 117 4 (2010), 328-333.
\bibitem{lael} F. Lael, K. Nourouzi, \textit{Compact Operators Defined on 2-Normed and 2-Probabilistic Normed Spaces}, Mathematical Problems in Engineering, 2009, (2009), Article Number: 950234.
\bibitem{CakalliForwardcontinuity} H. Cakalli, \textit{Forward continuity},  J. Comput. Anal. Appl., 13, 2, (2011), 225-230.  MR 2012c:26004
\bibitem{CakalliVariationsonQuasi-CauchySequences} H. Cakalli, \textit{Variations on Quasi-Cauchy Sequences}, Filomat, 29:1, (2015), 13-19.
\bibitem{cakallistatistical}H. Cakalli, \textit{Statistical quasi-Cauchy Sequences}, Mathematical and Computer Modelling, 54, (2011), 1620-1624.
\bibitem{CakalliersanStronglylacunarywardcontinuityin2-normedspaces} H. Cakalli, S. Ersan, Strongly lacunary ward continuity in 2-normed spaces, The Scientific World Journal, 2014 (2014), 5 pages, Article ID 479679.
    \bibitem{lacunary} H. Cakalli, S. Ersan, \textit{Lacunary ward continuity in 2-normed spaces}, Filomat, 29 (10) (2015), 2257-2263.
\bibitem{dutta} H. Dutta, \textit{On some n-normed linear space valued difference sequences}, Journal of the Franklin Institute, 348, (2011), 2876-2883.
\bibitem{hgunawan}H. Gunawan, \textit{The space of p-summable sequences and its natural n-norm}, Bull. Austral. Math. Soc., 64, (2001), 137-147.
\bibitem{gunawan} H. Gunawan, M. Mashadi, \textit{On n–normed spaces}, International Journal of Mathematics and Mathematical Sciences, 27(10), (2001), 631–639.
\bibitem{kocinac} Lj D. R., Kocinac, V. A., Khan,  K. M. A. S., Alshlool and H., Altaf, \textit{On some topological properties of intuitionistic 2-fuzzy n-normed linear spaces} Hacettepe Journal of Mathematics and Statistics, 49 1, (2020), 208-220.
\bibitem{gurdal} M. Gurdal, A. Sahnier, \textit{Ideal convergence in n–normal spaces and some new sequence spaces via n–norm}, Malaysian Journal of Fundamental and Applied Sciences, 4(1), (2014).
\bibitem{gurdalsari} M. Gurdal, N. Sari and E. Savas, \textit{A-statistically localized sequences in n-normed spaces}, Commun. Fac. Sci. Univ. Ank. Ser. A1 Math. Stat., 69, 2, (2020), 1484-1497.
\bibitem{mursaleen} M. Mursaleen and SK, Sharma,  \textit{Riesz Lacunary Almost Convergent Double Sequence Spaces Defined By Sequence Of Orlicz Functions Over N-Normed Spaces}, TWMS Journal Of Pure And Applied Mathematics, 8 1, (2017),43-63.
 \bibitem{khan} N. Khan, \textit{Classes of I-Convergent Double Sequences over n-Normed Spaces}, Journal of Function Spaces, (2016), Article Number7594031.
\bibitem{Malceski}R. Malceski, \textit{Strong $n$-convex $n$-normed spaces}, Mat. Bilten, 21, (1997), 81-102.
\bibitem{Kim} S.S. Kim and Y. J. Cho, \textit{Strict convexity in linear $n$-normed spaces}, Demonstratio Math., 29 4, (1996),739-744.
\bibitem{fuzzy} S. Altundag and E. Kamber, \textit{Lacunary Delta-statistical convergence in intuitionistic fuzzy n-normed space}, Journal Of Inequalities And Applications, 40, (2014).
\bibitem{gohler}
S. G\"ahler, \textit{$2$-metrische R\"aume und ihre topologische Struktur}, Math. Nachr., 26, (1963), 115-148.
\bibitem{gahler1}S. G\"ahler, \textit{Untersuchungen über verallgemeinerte $m$-metrische R\"aume I}, Math. Nachr., 40, (1969), 165-189.

\bibitem{ersan}S. Ersan and H. Cakalli, \textit{Ward continuity in $2$-normed Spaces}, Filomat, 29:7, (2015), 1507-1513.
\end{thebibliography}
\end{document}